\newtheorem{theorem}{Theorem}[section]
\newtheorem{corollary}{Corollary}[section]
\newtheorem{lemma}{Lemma}[section]
\newtheorem{remark}{Remark}[section]
\numberwithin{equation}{section}
\title[Inequalities for the derivative of Polynomials]{Inequalities for the derivative of Polynomials with restricted zeros}
\author{N. A. Rather$^1$}
\author{Ishfaq Dar$^2$}
\author{A. Iqbal$^3$}
\address{$^{1,2,3}$Department of Mathematics, University of Kashmir, Srinagar-190006, India}
\email{drnisar@uok.edu.in, ishfaq619@gmail.com, Itz.a.iqbal@gmail.com}
\begin{document}
\maketitle
\footnotetext{\textbf{AMS Mathematics Subject Classification(2010)}: 26D10, 41A17, 30C15.}
\footnotetext{\textbf{Keywords}: Polynomials, Schwarz lemma, Inequalities in the complex domain.  }
\begin{abstract}
In this paper we shall use the boundary Schwarz lemma of Osserman to obtain some generalisations and refinements of some well known results concerning the maximum modulus of the polynomials with restricted zeros due to Tur\'{a}n, Dubinin and others.
\end{abstract}
\section{\textbf{Introduction}}
Let $\mathcal{P}_n$ denote the class of all algebraic polynomials of the form\\
 $P(z)= \sum\limits_{j=o}^{n}a_jz^j$ of degree $n \geq 1.$ 
It was shown by P. Tur\'{a}n \cite{TR} that if $P\in \mathcal P_{n}$ has all its zeros in $\left|z\right|\leq 1$, then 
\begin{align}\label{a1}
\max_{|z|=1} |P^{\prime}(z)|\geq \frac{n}{2}\max_{|z|=1}|P(z)|. 
\end{align}
\noindent Equality in \eqref{a1} holds for $P(z)=az^{n}+b$, $|a|=|b|=1$.\\
\indent As an extension of \eqref{a1}, Govil \cite{GV} proved that if $P\in\mathcal{P}_n$ and $P(z)$ has all its zeros in $|z|\leq k, k \geq 1$, then
\begin{align}\label{a2}
\max_{|z|=1} |P^{\prime}(z)|\geq \frac{n}{1+k^n}\max_{|z|=1}|P(z)|.
\end{align}
The result is sharp as shown by the polynomial $P(z)=z^{n}+k^{n}$.\\
\indent By involving the minimum modulus of $P(z)$ on $|z|=1$, Aziz and Dawood \cite{AD}, proved under the hypothesis of inequality \eqref{a1} that
 \begin{align}\label{a3}
\max_{|z|=1} |P^{\prime}(z)|\geq \frac{n}{2}\left\{\max_{|z|=1}|P(z)|+\min_{|z|=1}|P(z)|\right\}. 
\end{align}
\noindent Equality in \eqref{a3} holds for $P(z)=az^{n}+b$, $|a|=|b|=1$.\\
\indent In literature, there exist several generalizations and extensions of \eqref{a1}, \eqref{a2} and \eqref{a3} (see \cite{AA}-\cite{ARR},\cite {MMR}, \cite{QRS}, \cite{RS}). Dubinin \cite{DB} obtain a refinement of \eqref{a1} by involving some of the coefficients of polynomial $P\in\mathcal{P}_n$ in the bound of inequality \eqref{a1}. More precisely, proved that if all the zeros of the polynomial $P\in\mathcal P_{n}$ lie in $|z|\leq 1$, then
\begin{align}\label{a4}
\max_{|z|=1} |P^{\prime}(z)|\geq \frac{1}{2}\bigg(n+\frac{|a_n|-|a_0|}{|a_n|+|a_0|}\bigg)\max_{|z|=1}|P(z)|.
\end{align}
\section{Main results}
In this paper, we are interested in estimating the lower bound for the maximum modulus of $P^{\prime}(z)$ on $|z|=1$ for $P\in\mathcal P_{n}$  not vanshing in the region $|z|>k$ where $k\geq 1$ and establish some refinements and generalizations of the inequalities \eqref{a1}, \eqref{a2}, \eqref{a3} and \eqref{a4}. We begin by proving:
\begin{theorem}\label{th1}
If all the zeros of polynomial $P\in\mathcal{P}_n$ of degree $n\geq 2$ lie in $|z|\leq k, k\geq 1,$ then
\begin{align}\label{e1}
\begin{split}
\max_{|z|=1}|P^{\prime}(z)|\geq {}& \frac{1}{1+k^{n}}\bigg(n+\frac{k^{n}|a_n|-|a_0|}{k^{n}|a_n|+|a_0|}\bigg) \max_{|z|=1} |P(z)|\\+{}&
\frac{|a_{n-1}|}{k(1+k^{n})}\bigg(n+\frac{k^{n}|a_n|-|a_0|}{k^{n}|a_n|+|a_0|}\bigg)\phi(k)+ |a_{1}|\psi(k),
\end{split}
\end{align}
where $\phi(k)= \left(\frac{k^{n}-1}{n}-\frac{k^{n-2}-1}{n-2}\right)$ or $\frac{(k-1)^{2}}{2}$ and  $\psi(k) =(1-1/k^2) ~\textnormal{or} ~ (1-1/k)$ according as $n>2$ or $n=2.$
\end{theorem}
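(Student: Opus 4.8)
The plan is to run the argument through the conjugate reciprocal polynomial $Q(z)=z^{n}\overline{P(1/\bar z)}=\sum_{m=0}^{n}\overline{a_{n-m}}\,z^{m}$, for which I will first record the two pointwise identities valid on $|z|=1$: namely $|Q(z)|=|P(z)|$ and $nP(z)-zP'(z)=z^{\,n-1}\overline{Q'(z)}$, so that $|Q'(z)|=|nP(z)-zP'(z)|$ there. From the second identity and the triangle inequality applied to $nP(z)=zP'(z)-\bigl(zP'(z)-nP(z)\bigr)$ I obtain the crude but essential relation
\[
|P'(z)|+|Q'(z)|\ \ge\ n\,|P(z)|,\qquad |z|=1,
\]
which already yields Govil's bound once one knows $|Q'|\le k^{n}|P'|$. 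The whole refinement then amounts to sharpening this last comparison at the point $z_{0}$, $|z_{0}|=1$, where $M:=\max_{|z|=1}|P|$ is attained.

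To sharpen it I would introduce the reflection of $P$ across the circle $|z|=k$, namely $P^{\star}(z)=\frac{z^{n}}{k^{n}}\,\overline{P(k^{2}/\bar z)}=\sum_{m}\overline{a_{n-m}}\,k^{\,n-2m}z^{m}$ (equivalently $P^{\star}(z)=k^{n}Q(z/k^{2})$), and form $B=P/P^{\star}$. Since every zero $z_{j}$ of $P$ satisfies $|z_{j}|\le k$, the reflected zeros $k^{2}/\bar z_{j}$ of $P^{\star}$ all lie in $|z|\ge k$; hence $B$ is analytic in $|z|<k$, has modulus $1$ on $|z|=k$, and is a finite Blaschke product of the disc $|z|<k$. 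Its Taylor data at the origin are exactly what feed the theorem: $B(0)=a_{0}/(\overline{a_{n}}k^{n})$, so that $|B(0)|=|a_{0}|/(k^{n}|a_{n}|)$ and
\[
\frac{1-|B(0)|}{1+|B(0)|}=\frac{k^{n}|a_{n}|-|a_{0}|}{k^{n}|a_{n}|+|a_{0}|},
\]
the precise factor occurring in \eqref{e1}; while $B'(0)$ involves $a_{1}\overline{a_{n}}$ and $a_{0}\overline{a_{n-1}}$, which is where the coefficients $a_{1}$ and $a_{n-1}$ enter. Rescaling $B$ to the unit disc and applying the boundary Schwarz lemma of Osserman, in the form retaining both $f(0)$ and $f'(0)$, at the boundary point corresponding to $z_{0}$ produces a lower bound for the logarithmic derivative of $B$ whose leading term carries the factor $\bigl(n+\tfrac{k^{n}|a_{n}|-|a_{0}|}{k^{n}|a_{n}|+|a_{0}|}\bigr)$ and whose next term carries $|a_{n-1}|$.

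The remaining point is that Osserman's estimate lives on $|z|=k$, whereas \eqref{e1} is a statement on $|z|=1$. I would bridge the two by integrating the resulting differential inequality for $B'/B$ radially across the annulus $1\le|z|\le k$; the weight $t^{\,n-1}-t^{\,n-3}=t^{\,n-3}(t^{2}-1)$ that appears there integrates to $\phi(k)=\frac{k^{n}-1}{n}-\frac{k^{n-2}-1}{n-2}$ for $n>2$, and the degeneracy of this formula at $n=2$ (where $\frac{k^{n-2}-1}{n-2}$ becomes $0/0$) forces the separate closed form $\frac{(k-1)^{2}}{2}$; the analogous, lower-order transfer of the $a_{1}$-contribution yields $\psi(k)=1-1/k^{2}$ for $n>2$ or $1-1/k$ for $n=2$. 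Collecting the main term, the $|a_{n-1}|$-term (which inherits the same factor $n+\tfrac{k^{n}|a_{n}|-|a_{0}|}{k^{n}|a_{n}|+|a_{0}|}$ together with the weight $\phi(k)/k$) and the $|a_{1}|$-term, and inserting them into the comparison for $|Q'(z_{0})|$, the crude relation above upgrades to $(1+k^{n})|P'(z_{0})|\ge nM+C$ with $C$ the required combination, which is \eqref{e1}.

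The main obstacle, as the structure makes clear, is this refined comparison with sharp constants: one must push Osserman's boundary lemma far enough to capture simultaneously the $B(0)$ term (the Dubinin-type factor) and the $B'(0)$ term (the $a_{1}$ and $a_{n-1}$ corrections), and then transfer the estimate from $|z|=k$ to $|z|=1$ without losing sharpness, tracking the annular weights that become $\phi$ and $\psi$. I expect the bookkeeping in the $B'(0)$ term and the $n=2$ degeneracy to be the most delicate parts; the positivity $|a_{0}|\le k^{n}|a_{n}|$ (forced by the zeros lying in $|z|\le k$) guarantees that the Dubinin-type factor, and hence the whole bound, is a genuine strengthening of Govil's inequality.
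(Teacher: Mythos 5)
You have correctly identified the paper's central object: your Blaschke quotient $B=P/P^{\star}$ for the disc $|z|<k$ is, up to the rescaling $z\mapsto kz$, exactly the quotient $g(z)/g^{*}(z)$ with $g(z)=P(kz)$ that the paper uses, and $|B(0)|=|a_0|/(k^{n}|a_n|)$ is indeed what produces the Dubinin-type factor. But two of your steps fail as described. First, Osserman's lemma in the form $|f^{\prime}(b)|\geq 2/(1+|f^{\prime}(0)|)$ (the paper's Lemma \ref{lm5}) requires $f(0)=0$, and your $B$ does not vanish at the origin; the paper's fix is to insert an extra factor $z$, working with $F(z)=zg(z)/g^{*}(z)$, so that $F(0)=0$ and $|F^{\prime}(0)|=|a_0|/(k^{n}|a_n|)$. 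Your appeal to an unspecified "form retaining both $f(0)$ and $f^{\prime}(0)$" is not the cited lemma and, with $B(0)\neq 0$, would not reproduce the exact factor $n+\frac{k^{n}|a_n|-|a_0|}{k^{n}|a_n|+|a_0|}$. Moreover there is no "boundary point corresponding to $z_0$": the point $z_0$ where $\max_{|z|=1}|P|$ is attained lies strictly inside the disc of radius $k$. The lemma is instead applied at \emph{every} point of the boundary circle, via the identity $1-n+2\,\mathrm{Re}\big(zg^{\prime}(z)/g(z)\big)=|F^{\prime}(z)|$ on $|z|=1$ (valid because $zF^{\prime}/F>0$ there), yielding the pointwise bound $k|P^{\prime}(kz)|\geq \frac{1}{2}\big(n+\frac{k^{n}|a_n|-|a_0|}{k^{n}|a_n|+|a_0|}\big)|P(kz)|$ on the whole of $|z|=1$, and hence a comparison of maxima on $|z|=k$.

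Second, and more seriously, your mechanism for the $|a_1|$ and $|a_{n-1}|$ terms is wrong: they do not come from $B^{\prime}(0)$, and "integrating the differential inequality for $B^{\prime}/B$ radially" cannot bridge $|z|=k$ to $|z|=1$, since integrating a logarithmic derivative along radii only controls $\log|B|$, i.e., compares $|P|$ with $|P^{\star}|$ on inner circles, and says nothing about $\max_{|z|=1}|P^{\prime}|$. In the paper the transfer is effected by two polynomial growth lemmas applied to two \emph{different} polynomials. The Frappier--Rahman--Ruscheweyh inequality (Lemma \ref{lm0}) applied to the degree-$(n-1)$ polynomial $P^{\prime}$, whose value at the origin is $P^{\prime}(0)=a_1$, gives $\max_{|z|=k}|P^{\prime}|\leq k^{n-1}\max_{|z|=1}|P^{\prime}|-(k^{n-1}-k^{n-3})|a_1|$ for $n>2$: this is the sole source of $\psi(k)|a_1|$. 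Separately, Lemma \ref{lm00} with $\alpha=0$ (itself proved by radial integration of Lemma \ref{lm0} combined with the Erd\H{o}s--Lax inequality, Lemma \ref{lmel}) is applied to the zero-free polynomial $g^{*}(z)=z^{n}\overline{P(k/\bar{z})}$, whose derivative at the origin is $k^{n-1}\overline{a_{n-1}}$: this converts $\max_{|z|=k}|P|$ into $\max_{|z|=1}|P|$ plus the $\phi(k)|a_{n-1}|$ correction, and is where your weight $t^{n-1}-t^{n-3}$ genuinely arises (explaining also the $n=2$ degeneracy). Your guessed weights are correct, but the derivation you sketch would not produce them, and without these two growth lemmas (or equivalents) the proof does not close; the opening gambit via $|P^{\prime}|+|Q^{\prime}|\geq n|P|$ and $|Q^{\prime}|\leq k^{n}|P^{\prime}|$ is in any case dispensable, since the Osserman step already delivers the full pointwise inequality on $|z|=k$.
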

\noindent The result is best possible and equality in \eqref{e1} holds for $P(z)= z^{n}+k^{n}$.
\begin{remark}\label{rm1} 
\textnormal{Since all the zeros of $P(z)$ lie in $|z| \leq k$ where $k\geq 1$, therefore,
 $|a_0| \leq k^n|a_n|.$ In view of this, inequality \ref{e1} constitutes a refinement of inequality \eqref{a2}. Further, inequality \eqref{e1} reduces to inequality \eqref{a4} for $k =1$.}
\end{remark}
\begin{theorem}\label{th2}
If all the zeros of polynomial $P\in\mathcal{P}_n$ of degree $n\geq 2$ lie in $|z|\leq k$ where $k\geq 1$ and $m=\min_{|z|=k}|P(z)|,$ then for $0 \leq l < 1$
\begin{align}\label{e2}
\begin{split}
\max_{|z|=1} |P^{\prime}(z)|{}&\geq \frac{n}{1+k^n} \big( \max_{|z|=1}|P(z)| + lm \big)+\psi(k)|a_1|\\
&+\frac{1}{k^n(1+k^n)} \bigg\{\bigg(\frac{k^n|a_n|-lm-|a_0|}{k^n|a_n|-lm+|a_0|}\bigg)\big(k^n\max_{|z|=1}|P(z)|-lm\big)\\
&+ k^{n-1}|a_{n-1}|\phi(k) \bigg(n+\frac{k^n|a_n|-lm-|a_0|}{k^n|a_n|-lm+|a_0|}\bigg) \bigg\},\\
\end{split}
\end{align}
\noindent where $\phi(k)$ and $\psi(k)$ are same as defined in Theorem \ref{th1}.
\end{theorem}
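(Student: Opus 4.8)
The plan is to reduce Theorem~\ref{th2} to Theorem~\ref{th1} by applying the latter to a perturbation of $P$ that encodes the minimum modulus $m=\min_{|z|=k}|P(z)|$. Fix $\lambda\in\mathbb{C}$ with $|\lambda|=l<1$ and set
\[
G(z)=P(z)-\lambda m\Big(\frac{z}{k}\Big)^{n}.
\]
If $m=0$ there is nothing to prove, since then $G=P$ and \eqref{e2} collapses to \eqref{e1}; so assume $m>0$. In that case $P$ has no zero on $|z|=k$, hence all $n$ zeros of $P$ lie in the open disc $|z|<k$. On $|z|=k$ one has $|\lambda m (z/k)^{n}|=lm<m\le|P(z)|$, so by Rouch\'{e}'s theorem $G$ has exactly $n$ zeros in $|z|<k$; thus $G\in\mathcal P_n$ has all its zeros in $|z|\le k$ and Theorem~\ref{th1} applies to it.

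Writing $G(z)=\sum b_jz^j$, the perturbation affects only the top coefficient: $b_0=a_0$, $b_1=a_1$, $b_{n-1}=a_{n-1}$, while $k^{n}|b_n|=|k^{n}a_n-\lambda m|\ge k^{n}|a_n|-lm$. Since the right-hand side of \eqref{e1} is increasing in $\frac{k^{n}|b_n|-|b_0|}{k^{n}|b_n|+|b_0|}$, I may replace this fraction by its lower bound $B:=\frac{k^{n}|a_n|-lm-|a_0|}{k^{n}|a_n|-lm+|a_0|}$, which is exactly the quantity occurring in \eqref{e2}. Applying Theorem~\ref{th1} to $G$ in pointwise form (as its proof provides at a maximum-modulus point), I obtain, at a point $\zeta$ on $|z|=1$ where $|G|$ is maximal,
\[
|G'(\zeta)|\ge \frac{n+B}{1+k^{n}}\max_{|z|=1}|G(z)|+\frac{(n+B)|a_{n-1}|}{k(1+k^{n})}\phi(k)+\psi(k)|a_1|.
\]

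It remains to pass back from $G$ to $P$. For the modulus I would use the harmless estimate $\max_{|z|=1}|G(z)|\ge\max_{|z|=1}|P(z)|-lm/k^{n}$. For the derivative I would exploit the still-free argument of $\lambda$: since $P'(\zeta)=G'(\zeta)+\lambda m n\zeta^{\,n-1}/k^{n}$ with $|\lambda m n\zeta^{\,n-1}/k^{n}|=lmn/k^{n}$ on $|z|=1$, choosing $\arg\lambda$ so that $\lambda\zeta^{\,n-1}$ aligns with $G'(\zeta)$ gives $|P'(\zeta)|=|G'(\zeta)|+lmn/k^{n}$, whence $\max_{|z|=1}|P'(z)|\ge|G'(\zeta)|+lmn/k^{n}$. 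Substituting both estimates into the displayed inequality and simplifying, the minimum-modulus contributions combine into $\frac{(nk^{n}-B)lm}{k^{n}(1+k^{n})}$, which is precisely the extra term carried by \eqref{e2} over \eqref{e1}; equality for $P(z)=z^{n}+k^{n}$ is immediate since there $m=0$.

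The main obstacle is the sign of the minimum-modulus term. A crude triangle-inequality transfer, bounding $\max|P'|\ge\max|G'|-lmn/k^{n}$ against the \emph{global} maximum of $|G'|$, is too lossy and in fact produces $-\frac{(nk^{n}-B)lm}{k^{n}(1+k^{n})}$, the negative of what is needed. The gain in the correct direction hinges on using Theorem~\ref{th1} pointwise at the maximum-modulus point $\zeta$ of $G$ and then aligning $\arg\lambda$ to make the monomial's derivative \emph{add} there, rather than treating \eqref{e1} as a black box. The one delicate point is that $\zeta$ depends on $\lambda$, so the alignment of $\arg\lambda$ is really a fixed-point condition; I would resolve it either by a continuity/intermediate-value argument in $\arg\lambda$, or by first establishing the estimate for polynomials whose zeros lie in $|z|<k$ and then passing to the limit.
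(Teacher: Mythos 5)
Your reduction has the right skeleton --- perturb by $\lambda m(z/k)^{n}$, apply Rouch\'e, exploit the free argument of $\lambda$, and indeed your bookkeeping of the $lm$-terms lands exactly on the surplus $\frac{(nk^{n}-B)lm}{k^{n}(1+k^{n})}$ that \eqref{e2} carries over \eqref{e1} --- but the two steps on which everything hinges are not available. First, Theorem~\ref{th1} has no ``pointwise form'' at a maximum-modulus point on $|z|=1$: its proof yields the Schwarz-lemma inequality $|g^{\prime}(z)|\geq\tfrac{1}{2}(n+B)\,|g(z)|$ pointwise only for $g(z)=G(kz)$ on $|z|=1$, i.e.\ on the circle $|z|=k$ for $G$, and the descent to the unit circle goes through Lemma~\ref{lm0} applied to $G^{\prime}$ and Lemma~\ref{lm00} applied to the reciprocal polynomial --- both statements about \emph{global} maxima. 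Nothing in that machinery controls $|G^{\prime}(\zeta)|$ at the specific maximizer $\zeta$ of $|G|$ on $|z|=1$ (the maximum of $|G^{\prime}|$ need not be attained anywhere near $\zeta$), so your displayed lower bound for $|G^{\prime}(\zeta)|$ is unsupported. Second, even granting it, the alignment $|P^{\prime}(\zeta)|=|G^{\prime}(\zeta)|+lmn/k^{n}$ requires $\arg(\lambda\zeta(\lambda)^{n-1})$ to match $\arg G_{\lambda}^{\prime}(\zeta(\lambda))$, and you correctly flag this as a fixed-point condition; but a maximizer $\zeta(\lambda)$ of $|G_{\lambda}|$ can jump discontinuously as $\arg\lambda$ varies, so the intermediate-value argument does not go through, and your fallback (zeros in $|z|<k$ plus a limit) addresses zero location, not this difficulty.

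The paper's proof circumvents exactly this obstacle by optimizing $\arg\lambda$ \emph{before} any transfer of maxima, at the level of $h(z)=P(kz)$: the Osserman-based inequality \eqref{tpa3} holds simultaneously for every $|\lambda|<1$ and every $|z|=1$, with right-hand side depending on $\lambda$ only through $|\lambda|$, so for each \emph{fixed} $z$ one may choose $\arg\lambda$ (depending on $z$) to make the perturbation subtract on the left, $|h^{\prime}(z)+nm\lambda z^{n-1}|=|h^{\prime}(z)|-nm|\lambda|$. That such a choice exists is guaranteed by Gauss--Lucas: since $H^{\prime}=h^{\prime}+\lambda nmz^{n-1}$ has all zeros in $|z|<1$ for all $|\lambda|<1$, one gets $|h^{\prime}(z)|\geq nm$ on $|z|=1$, as in \eqref{gl}. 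No fixed point arises, and maxima are taken only afterwards. Note also that part of the $lm$-gain in \eqref{e2} is harvested not from the derivative side at all but from Lemma~\ref{lm00} with $|\alpha|=l$, which lower-bounds $\max_{|z|=k}|P(z)|$ by $\frac{2k^{n}}{1+k^{n}}\max_{|z|=1}|P(z)|$ plus the term $l\,\frac{k^{n}-1}{k^{n}+1}\,m$ --- a use of the minimum modulus your outline omits. To repair your argument, replace ``Theorem~\ref{th1} pointwise at $\zeta$'' by this scheme: fix $z$ on $|z|=1$, optimize $\arg\lambda$ there using the Gauss--Lucas bound, and only then pass to maxima via Lemmas~\ref{lm0} and~\ref{lm00}.
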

\noindent The result is sharp and equality in \eqref{e2} holds for $P(z)= z^{n}+k^{n}$.
\begin{remark}\label{rm2}
\textnormal{ As before, it can be easily seen that Theorem \ref{th2} is a refinement of Theorem \ref{th1}. Moreover, for $k=1$, we get the following result which includes a refinement of inequality \eqref{a4} as a special case.}
\end{remark}
\begin{corollary}\label{c1}
If all the zeros of $P\in\mathcal{P}_n$ of degree $n\geq 2,$ lie in $|z|\leq 1$ and $ m_1=\min_{|z|=1}|P(z)|,$ then for $~0 \leq l < 1$
\begin{align}\label{e5}
\begin{split}
\max_{|z|=1}|P^{\prime}(z)| \geq{}& \frac{n}{2}\left\{\max_{|z|=1}|P(z)|+lm_1 \right\}\\
&+\frac{1}{2}\bigg(\frac{|a_n|-lm_1-|a_0|}{|a_n|-lm_1+|a_0|}\bigg)\big(\max_{|z|=1}|P(z)|-lm_1\big),
\end{split}
\end{align}
\end{corollary}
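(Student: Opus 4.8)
The plan is to obtain Corollary \ref{c1} as the special case $k=1$ of Theorem \ref{th2}, so that the entire argument reduces to substituting $k=1$ into \eqref{e2} and checking that the auxiliary quantities collapse. First I would record that at $k=1$ the hypothesis that all zeros lie in $|z|\le k$ becomes the hypothesis that all zeros lie in $|z|\le 1$, and that $m=\min_{|z|=k}|P(z)|$ becomes $m=\min_{|z|=1}|P(z)|=m_1$; this matches the hypotheses of the corollary verbatim.

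Next I would verify that both auxiliary functions vanish at $k=1$. For $\psi$, in the case $n>2$ one has $\psi(1)=1-1/1^2=0$, and in the case $n=2$ one has $\psi(1)=1-1/1=0$; hence the term $\psi(k)|a_1|$ in \eqref{e2} disappears. For $\phi$, in the case $n>2$ we compute $\phi(1)=\tfrac{1-1}{n}-\tfrac{1-1}{n-2}=0$, while in the case $n=2$ we have $\phi(1)=(1-1)^2/2=0$; consequently every term in \eqref{e2} carrying a factor of $\phi(k)$ also drops out. The point that requires genuine care here is that this vanishing must be checked uniformly across both the $n>2$ and $n=2$ branches of the definitions, since the two branches are given by different formulas; this bookkeeping is the only real obstacle, as there is no new analytic content.

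Finally I would substitute $k=1$ into the surviving terms of \eqref{e2}. The prefactor $n/(1+k^n)$ becomes $n/2$, the factor $1/\bigl(k^n(1+k^n)\bigr)$ becomes $1/2$, the quantity $k^n|a_n|$ becomes $|a_n|$, and $k^n\max_{|z|=1}|P(z)|-lm$ becomes $\max_{|z|=1}|P(z)|-lm_1$. Collecting these, the right-hand side of \eqref{e2} reduces to
\[
\frac{n}{2}\big\{\max_{|z|=1}|P(z)|+lm_1\big\}+\frac{1}{2}\bigg(\frac{|a_n|-lm_1-|a_0|}{|a_n|-lm_1+|a_0|}\bigg)\big(\max_{|z|=1}|P(z)|-lm_1\big),
\]
which is precisely the bound \eqref{e5}, completing the proof.
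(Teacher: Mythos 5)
Your proposal is correct and is exactly how the paper obtains the corollary: Remark \ref{rm2} states that Corollary \ref{c1} is just the case $k=1$ of Theorem \ref{th2}, and your verification that $\phi(1)=\psi(1)=0$ in both the $n>2$ and $n=2$ branches, together with the substitutions $n/(1+k^n)\to n/2$, $1/(k^n(1+k^n))\to 1/2$, $k^n|a_n|\to|a_n|$, and $m\to m_1$, is precisely the required bookkeeping. Nothing is missing.
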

\noindent The result is sharp and equality holds for $P(z)=(z^{n}+1)$.
\section{\textbf{Lemmas}}
\indent For the proof of these theorems, we need the following lemmas. The first Lemma is due to P. Erd\"{o}s and P. D. Lax \cite{LAX}
\begin{lemma}\label{lmel}
If $P\in \mathcal P_{n}$  does not vanish in $|z|<1$, then
\begin{align}\label{a0}
\max_{|z|=1} |P^{\prime}(z)|\leq \frac{n}{2}\max_{|z|=1}|P(z)|. 
\end{align}
\end{lemma}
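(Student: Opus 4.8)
The plan is to reduce the Erd\"os--Lax bound to the classical Bernstein inequality together with one pointwise comparison on the unit circle. First I would introduce the reciprocal polynomial $Q(z)=z^{n}\overline{P(1/\bar z)}$, so that $Q\in\mathcal P_{n}$, all the zeros of $Q$ lie in $|z|\le 1$ (they are the reflections $1/\overline{z_{k}}$ of the zeros $z_{k}$ of $P$, which lie in $|z|\ge 1$), and $|Q(z)|=|P(z)|$ for $|z|=1$. Differentiating the relation defining $Q$ and using $1/\bar z=z$ on the circle, I would record the boundary identity $|Q'(z)|=|nP(z)-zP'(z)|$ together with $|P'(z)|=|zP'(z)|$, both valid for $|z|=1$.

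The conceptual heart is the pointwise comparison $|P'(z)|\le|Q'(z)|$ for $|z|=1$. By the identity above this is equivalent to $|zP'(z)|\le|nP(z)-zP'(z)|$, i.e.\ to $\operatorname{Re}\bigl(zP'(z)/(nP(z))\bigr)\le \tfrac12$ at every boundary point where $P(z)\ne 0$. Writing the logarithmic derivative as a sum over the zeros $z_{1},\dots,z_{n}$ of $P$ gives $zP'(z)/P(z)=\sum_{k} z/(z-z_{k})$, and an elementary computation shows that for $|z|=1$ each summand satisfies $\operatorname{Re}\bigl(z/(z-z_{k})\bigr)\le\tfrac12$ precisely because $|z_{k}|\ge 1=|z|$; summing the $n$ terms yields the bound $\tfrac n2$. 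The technical point to handle carefully is the presence of zeros of $P$ on $|z|=1$ (and the case $P(z)=0$), which I would dispose of by a limiting argument applied to $P(rz)$ as $r\uparrow 1$ (for $r<1$ all zeros are pushed strictly outside the closed disk), or by a small outward perturbation of the zeros.

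To finish, I would combine this comparison with the sharpened Bernstein inequality $|P'(z)|+|Q'(z)|\le n\max_{|z|=1}|P(z)|$, which holds for every $P\in\mathcal P_{n}$ and its reciprocal $Q$. Evaluating at a boundary point $z_{0}$ where $|P'|$ attains its maximum on $|z|=1$, and using $|P'(z_{0})|\le|Q'(z_{0})|$, one obtains $2\max_{|z|=1}|P'(z)|=2|P'(z_{0})|\le|P'(z_{0})|+|Q'(z_{0})|\le n\max_{|z|=1}|P(z)|$, which is exactly \eqref{a0}.

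I expect the main obstacle to be this last ingredient. The pointwise comparison $|P'|\le|Q'|$ is elementary, but on its own it only reproduces the unsharp bound $\max|P'|\le n\max|P|$ (via plain Bernstein applied to $Q$); extracting the decisive factor $\tfrac12$ requires the additive sharpening $|P'|+|Q'|\le n\max|P|$, whose proof is genuinely nontrivial. A rotation together with the triangle inequality only delivers the constant $2n$, so one must instead exploit the full structure of the pair $(P,Q)$ (for instance through the Laguerre--Szeg\H{o} apolarity machinery or an equivalent extremal argument) to reach the sharp constant $n$.
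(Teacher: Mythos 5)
The paper offers no proof of this lemma at all: it is the Erd\H{o}s--Lax theorem, quoted verbatim with a citation to \cite{LAX}, so your argument can only be compared with the standard literature, and on those terms it is correct and is in fact the classical route. Your pointwise comparison $|P^{\prime}(z)|\leq |Q^{\prime}(z)|$ on $|z|=1$ is exactly the paper's Lemma \ref{lm3} with the roles of $P$ and $Q$ exchanged, and your verification of it via $\operatorname{Re}\big(z/(z-z_{k})\big)\leq \tfrac{1}{2}$ for $|z_{k}|\geq 1$, together with the dilation $P(rz)$, $r\uparrow 1$, to dispose of boundary zeros, is sound (note $a_{0}=P(0)\neq 0$ guarantees $Q$ has exact degree $n$). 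The one ingredient you quote rather than prove, $|P^{\prime}(z)|+|Q^{\prime}(z)|\leq n\max_{|z|=1}|P(z)|$ on $|z|=1$, is indeed where the factor $\tfrac{1}{2}$ comes from, but your closing worry overstates its cost: no Laguerre--Szeg\H{o} apolarity is needed, since it follows from your own comparison step by the same Rouch\'e device this paper uses in Lemma \ref{lm00} and Theorem \ref{th2}. Namely, for $|\lambda|>1$ and $M=\max_{|z|=1}|P(z)|$, the polynomial $h(z)=P(z)+\lambda M z^{n}$ has all its zeros in $|z|<1$ and its reciprocal is $Q(z)+\bar{\lambda}M$, so the comparison applied to $h$ gives $|Q^{\prime}(z)|\leq |P^{\prime}(z)+n\lambda M z^{n-1}|$ for $|z|=1$; since the unsharpened Bernstein inequality gives $|P^{\prime}(z)|\leq nM<n|\lambda|M$, you may choose $\arg\lambda$ for each fixed $z$ so that the right-hand side equals $n|\lambda|M-|P^{\prime}(z)|$, and letting $|\lambda|\to 1$ yields the additive inequality. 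With that supplied, evaluating at a point $z_{0}$ where $|P^{\prime}|$ peaks gives $2\max_{|z|=1}|P^{\prime}(z)|\leq |P^{\prime}(z_{0})|+|Q^{\prime}(z_{0})|\leq nM$, and your proof of \eqref{a0} is fully self-contained.
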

Next Lemma is a special case of a result due to Aziz and Rather\cite{ANR,AR}. 
\begin{lemma}\label{lm3}
If $P\in\mathcal{P}_n$ and $P(z)$ has its all zeros in $|z|\leq 1$ and $Q(z)=z^n \overline{P\big({1}/{\overline{z}}\big)},$ then for $|z|=1$,
\begin{align*}
|Q^{\prime}(z)|\leq |P^{\prime}(z)|.
\end{align*}
\end{lemma}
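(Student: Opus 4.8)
The plan is to use the factorization of $P$ together with an explicit computation of logarithmic derivatives on the unit circle. Writing $P(z)=a_n\prod_{j=1}^{n}(z-z_j)$ with $|z_j|\le 1$, a direct calculation gives $Q(z)=\overline{a_n}\prod_{j=1}^{n}(1-\overline{z_j}z)$, so that the zeros of $Q$ are the points $1/\overline{z_j}$, all lying in $|z|\ge 1$. I would first record the boundary identity $|Q(z)|=|P(z)|$ for $|z|=1$, which follows at once from $1/\overline{z}=z$ on the unit circle and $|z^n|=1$.

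The heart of the argument is to compare $A:=zP'(z)/P(z)$ and $B:=zQ'(z)/Q(z)$ at a point with $|z|=1$ and $P(z)\ne 0$. Using $\frac{P'}{P}=\sum_j\frac{1}{z-z_j}$ and $\frac{Q'}{Q}=\sum_j\frac{-\overline{z_j}}{1-\overline{z_j}z}$, together with the boundary fact $|z-z_j|^2=|1-\overline{z_j}z|^2$ (valid since $|z|=1$), I would bring both $A$ and $B$ to sums over $j$ with the common denominator $|z-z_j|^2$, obtaining $A=\sum_j\frac{1-z\overline{z_j}}{|z-z_j|^2}$ and $B=\sum_j\frac{|z_j|^2-z\overline{z_j}}{|z-z_j|^2}$. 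Taking real and imaginary parts then yields three identities: $\operatorname{Im}A=\operatorname{Im}B$, next $\operatorname{Re}A+\operatorname{Re}B=\sum_j\frac{|z-z_j|^2}{|z-z_j|^2}=n$, and finally $\operatorname{Re}A-\operatorname{Re}B=\sum_j\frac{1-|z_j|^2}{|z-z_j|^2}\ge 0$.

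Given these, the conclusion is immediate: since $A$ and $B$ share the same imaginary part, $|A|^2-|B|^2=(\operatorname{Re}A)^2-(\operatorname{Re}B)^2=(\operatorname{Re}A+\operatorname{Re}B)(\operatorname{Re}A-\operatorname{Re}B)=n\sum_j\frac{1-|z_j|^2}{|z-z_j|^2}\ge 0$. Hence $|A|\ge|B|$, and multiplying through by $|P(z)|=|Q(z)|$ (using $|z|=1$) gives $|P'(z)|\ge|Q'(z)|$ at every boundary point where $P(z)\ne 0$.

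The one point requiring care is the set of zeros of $P$ lying on $|z|=1$; these are finite in number, hence nowhere dense in the circle. Since $|P'|$ and $|Q'|$ are continuous and the desired inequality already holds on the dense complement, it extends to all of $|z|=1$ by continuity. Alternatively, one observes that a unimodular zero $z_j$ satisfies $1/\overline{z_j}=z_j$, so it is a zero of $Q$ of the same multiplicity, and the local behaviour can be compared directly. I expect this boundary bookkeeping, rather than the algebraic identities, to be the only delicate step.
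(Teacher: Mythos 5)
You are correct, and the comparison here is asymmetric: the paper gives no proof of Lemma \ref{lm3} at all, recording it only as a special case of a result of Aziz and Rather \cite{ANR,AR}, so your computation supplies a self-contained elementary argument where the paper relies on a citation. Your three boundary identities all check out: with $A=zP^{\prime}(z)/P(z)$ and $B=zQ^{\prime}(z)/Q(z)$, on $|z|=1$ one indeed has $A=\sum_{j}(1-z\overline{z_j})/|z-z_j|^{2}$ and $B=\sum_{j}(|z_j|^{2}-z\overline{z_j})/|z-z_j|^{2}$ (the formulas remain valid if some $z_j=0$, since the corresponding factor $1-\overline{z_j}z$ of $Q$ is then constant and the corresponding term of $B$ vanishes), whence $\operatorname{Im}A=\operatorname{Im}B$, $\operatorname{Re}A+\operatorname{Re}B=n$, and $A-B=\sum_{j}(1-|z_j|^{2})/|z-z_j|^{2}\geq 0$; your handling of the finitely many unimodular zeros of $P$ by density and continuity is also sound, and consistent with the observation that $|P|=|Q|$ on the circle forces $Q$ to vanish at those points as well. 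Your route in fact proves more than the lemma asserts: it gives the exact surplus $|P^{\prime}(z)|^{2}-|Q^{\prime}(z)|^{2}=n|P(z)|^{2}\sum_{j}(1-|z_j|^{2})/|z-z_j|^{2}$ at points of $|z|=1$ with $P(z)\neq 0$, and combining $|A|\geq |B|$ with $\operatorname{Re}A+\operatorname{Re}B=n$ yields $2|P^{\prime}(z)|\geq n|P(z)|$, i.e.\ Tur\'an's inequality \eqref{a1}, as a free byproduct. It is worth noting that this style of boundary bookkeeping is exactly what the paper deploys directly in its proof of Theorem \ref{th1} (the identity \eqref{thp1} and the positivity of $zF^{\prime}(z)/F(z)$ on $|z|=1$), so your argument is very much in the paper's spirit even though the paper itself outsources this particular lemma.
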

\noindent The following result is due to Frappier, Rahman and Ruscheweyh \cite{FRR}. 
\begin{lemma}\label{lm0}
If $P\in\mathcal{P}_n$ is a polynomial of degree $n\geq 1$, then for $R\geq 1$,
\begin{align}\label{eq3}
\max_{|z|=R}\left|P(z)\right|\leq R^{n}\max_{|z|=1}\left|P(z)\right|-(R^{n}-R^{n-2})|P(0)| \qquad \textnormal{if} \quad n > 1
\end{align}
\textnormal{and} 
\begin{align}\label{eq4}
\max_{|z|=R}\left|P(z)\right|\leq R\max_{|z|=1}\left|P(z)\right|-(R-1)|P(0)| \qquad \textnormal{if} \quad n = 1.
\end{align}
\end{lemma}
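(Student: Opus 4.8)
The plan is to prove the two displayed inequalities together, treating $n\ge 2$ as the main case and disposing of $n=1$ by a direct computation. First I would pass to the reversed polynomial $S(w)=w^{n}P(1/w)=a_0w^{n}+a_1w^{n-1}+\dots+a_n$. Since $|S(w)|=|P(1/w)|$ on $|w|=1$, one has $\max_{|w|=1}|S(w)|=\max_{|z|=1}|P(z)|$, while the substitution $w=1/z$ gives $\max_{|z|=R}|P(z)|=R^{n}\max_{|w|=1/R}|S(w)|$. Writing $r=1/R\le 1$ and dividing \eqref{eq3} by $R^{n}$, the assertion for $n>1$ is \emph{equivalent} to the inner-circle estimate
\begin{equation*}
\max_{|w|=r}|S(w)|\ \le\ M-(1-r^{2})\,|a_0|,\qquad M:=\max_{|w|=1}|S(w)|,
\end{equation*}
in which $a_0$ is now the \emph{leading} coefficient of $S$; similarly \eqref{eq4} becomes the bound with $(1-r^{2})$ replaced by $(1-r)$ for $\deg S=1$.

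For the degree-one case $S(w)=a_0w+a_1$ one has $M=|a_0|+|a_1|$ and $\max_{|w|=r}|S(w)|=|a_1|+r|a_0|=M-(1-r)|a_0|$, so \eqref{eq4} holds with equality; this is the base of the argument and needs no further work. For $n\ge 2$ the key is to estimate $|S(w_0)|$ at a point $|w_0|=r$ in terms of $M$ and the top coefficient $a_0$. Since $|w_0|<1$, Cauchy's formula on $|w|=1$ represents both quantities through the boundary values of $S$:
\begin{equation*}
S(w_0)=\frac{1}{2\pi i}\oint_{|w|=1}\frac{S(w)}{w-w_0}\,dw,\qquad a_0=\frac{1}{2\pi i}\oint_{|w|=1}\frac{S(w)}{w^{\,n+1}}\,dw.
\end{equation*}
I would then form the single combination $S(w_0)+\lambda a_0$ for a parameter $\lambda$, fix the argument of $\lambda$ so that $\lambda a_0$ is aligned with $S(w_0)$ (so that $|S(w_0)+\lambda a_0|=|S(w_0)|+|\lambda|\,|a_0|$), and take $|\lambda|=1-r^{2}$. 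Bounding the resulting contour integral by $M$ times the $L^{1}$-norm of its kernel yields
\begin{equation*}
|S(w_0)|+(1-r^{2})|a_0|\ \le\ \frac{M}{2\pi}\int_{0}^{2\pi}\Bigl|\frac{e^{i\theta}}{e^{i\theta}-w_0}+\lambda e^{-in\theta}\Bigr|\,d\theta .
\end{equation*}

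The heart of the matter, and the step I expect to be the main obstacle, is the \emph{sharp} evaluation that, after normalizing $w_0=r$ to the positive axis and choosing $\lambda$ as above, the displayed kernel has $L^{1}$-norm exactly $1$, so the right-hand side collapses to $M$; this is precisely the computational content of the Frappier--Rahman--Ruscheweyh inequality and is where the hypothesis $n\ge 2$ (which forces the admissible $|\lambda|=1-r^{2}$) enters, the analogous computation for $n=1$ forcing $|\lambda|=1-r$. Taking the maximum over $w_0$ on $|w|=r$ then gives the inner-circle estimate, and substituting $r=1/R$ returns \eqref{eq3} and \eqref{eq4}. Finally I would record that the estimate is tight: for $P(z)=z^{n}+k^{n}$, equivalently $S(w)=1+k^{n}w^{n}$, both sides of the inner estimate agree at $n=2$ and in the extremal configurations, which matches the claimed sharpness and the reduction of \eqref{eq3} to an identity as $R\to 1$.
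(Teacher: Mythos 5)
Your reduction is correct and cleanly executed: passing to $S(w)=w^{n}P(1/w)$, noting $\max_{|z|=R}|P(z)|=R^{n}\max_{|w|=r}|S(w)|$ with $r=1/R$, and checking that \eqref{eq4} becomes an identity for degree one are all fine. (For the record, the paper offers no proof at all of this lemma --- it is quoted from Frappier--Rahman--Ruscheweyh \cite{FRR} --- so everything rests on your argument for $n\ge 2$.) But that argument has a genuine gap, and it is exactly at the step you yourself flag: the claim that, with $w_0=r$ and $|\lambda|=1-r^{2}$ suitably rotated, the kernel $\frac{e^{i\theta}}{e^{i\theta}-w_0}+\lambda e^{-in\theta}$ has $L^{1}$-norm exactly $1$. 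This is false, and not marginally so. At $r=0$ (the limit $R\to\infty$, so $|\lambda|=1$) the norm is
\[
\frac{1}{2\pi}\int_{0}^{2\pi}\bigl|1+\lambda e^{-in\theta}\bigr|\,d\theta=\frac{1}{2\pi}\int_{0}^{2\pi}\bigl|1+e^{i\phi}\bigr|\,d\phi=\frac{4}{\pi}>1,
\]
independently of $\arg\lambda$, so by continuity the norm exceeds $1$ for all small $r$; and as $r\to 1^{-}$ the Cauchy term alone gives $\frac{1}{2\pi}\int_{0}^{2\pi}|e^{i\theta}-r|^{-1}\,d\theta\sim \frac{1}{\pi}\log\frac{1}{1-r}\to\infty$, while the correction contributes only $O(1-r^{2})$, so the norm even diverges. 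Your method would then yield $|S(w_0)|+(1-r^{2})|a_0|\le \|K\|_{1}M$ with $\|K\|_{1}$ strictly (sometimes unboundedly) larger than $1$, which is weaker than the statement to be proved.

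The conceptual reason the step fails is that an $L^{1}$-bound on this particular kernel would prove the estimate for every bounded analytic function, whereas the inequality genuinely uses that $S$ is a polynomial of degree at most $n$. On that finite-dimensional subspace the representing kernel is far from unique --- you may add any kernel whose Fourier coefficients vanish at the frequencies $0,1,\dots,n$ --- and the norm of the functional $S\mapsto S(w_0)+\lambda a_0$ is the \emph{infimum} of $\|K\|_{1}$ over all such representatives, not the norm of the naive Cauchy-formula representative you chose. (A sanity check: your $r=0$ case is exactly Visser's inequality $|a_n|+|a_0|\le M$, which is true but already unobtainable from the naive kernel.) So to rescue the plan you would have to exhibit an optimized representative and prove its $L^{1}$-norm is at most $1$, which is a nontrivial extremal problem; Frappier, Rahman and Ruscheweyh establish \eqref{eq3} and \eqref{eq4} by a different and genuinely harder route in \cite{FRR}. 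As written, your proposal proves only the cases $n=1$ and records the correct reduction; the core of the lemma remains unproved.
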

\noindent From above lemma, we deduce:
\begin{lemma}\label{lm00}
If $P\in\mathcal{P}_n=a_{n}\prod_{j=1}^{n}(z-z_j)$ is a polynomial of degree $n \geq 2$ having no zeros in $|z|<1$, then for every $\alpha \in \mathbb{C}$ with $|\alpha|\leq 1$ and $R\geq 1$,
\begin{align}\label{eqf}
\begin{split}
\max_{|z|=R}\left|P(z)\right|\leq {}& \frac{R^{n}+1}{2}\max_{|z|=1}\left|P(z)\right|
-|\alpha|\frac{R^{n}-1}{2}\min_{|z|=1}\left|P(z)\right|\\-{}&\left(\frac{R^{n}-1}{n}-\frac{R^{n-2}-1}{n-2}\right)|P^{\prime}(0)| \qquad \textnormal{if} \quad n> 2 
\end{split}
\end{align} 
\textnormal{and} 
\begin{align}\label{eqg}
\begin{split}
\max_{|z|=R}\left|P(z)\right|\leq {}& \frac{R^{2}+1}{2}\max_{|z|=1}\left|P(z)\right|\\
-{}&|\alpha|\frac{R^{2}-1}{2}\min_{|z|=1}\left|P(z)\right|-\frac{(R-1)^{2}}{2}|P^{\prime}(0)| \qquad \textnormal{if} \quad n = 2.
\end{split}
\end{align}
\end{lemma}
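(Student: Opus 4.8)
The plan is to reduce to the extremal case $|\alpha|=1$ and then bound $\max_{|z|=R}|P|$ by integrating a bound for $\max_{|z|=t}|P'|$ along radii from $|z|=1$ out to $|z|=R$. First I would note that, for a fixed $P$, the right-hand sides of \eqref{eqf} and \eqref{eqg} are non-increasing in $|\alpha|$: their only $\alpha$-dependence is the term $-|\alpha|\frac{R^{n}-1}{2}\min_{|z|=1}|P|$, whose coefficient is non-positive since $R\ge 1$. Hence it suffices to establish both inequalities at $|\alpha|=1$, after which the case $|\alpha|\le 1$ is automatic. Write $M=\max_{|z|=1}|P|$ and $m=\min_{|z|=1}|P|$, and observe $P'(0)=a_{1}$; because $P$ has no zeros in $|z|<1$, the minimum-modulus principle gives $|P(z)|\ge m$ throughout $|z|\le 1$.

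The engine is the radial estimate: for $z=Re^{i\theta}$ one has $P(Re^{i\theta})=P(e^{i\theta})+\int_{1}^{R}P'(te^{i\theta})e^{i\theta}\,dt$, whence $\max_{|z|=R}|P|\le M+\int_{1}^{R}\max_{|z|=t}|P'(z)|\,dt$. To the integrand I apply Lemma \ref{lm0} to the polynomial $P'$, which has degree $n-1$: for $n>2$, inequality \eqref{eq3} yields $\max_{|z|=t}|P'|\le t^{\,n-1}\max_{|z|=1}|P'|-(t^{\,n-1}-t^{\,n-3})|P'(0)|$, while for $n=2$ the linear case \eqref{eq4} gives $\max_{|z|=t}|P'|\le t\max_{|z|=1}|P'|-(t-1)|P'(0)|$. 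This is exactly where the two stated regimes of the lemma separate, and it explains why the coefficient of $|P'(0)|$ takes its two forms.

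It remains to control $\max_{|z|=1}|P'|$, and here I would invoke the minimum-modulus refinement of the Erd\"os--Lax inequality, $\max_{|z|=1}|P'|\le \frac{n}{2}(M-m)$, which is valid precisely because $P$ has no zeros in $|z|<1$ (Aziz--Dawood \cite{AD}). Substituting this and integrating, using $\int_{1}^{R}t^{\,n-1}\,dt=\frac{R^{n}-1}{n}$ and $\int_{1}^{R}(t^{\,n-1}-t^{\,n-3})\,dt=\frac{R^{n}-1}{n}-\frac{R^{n-2}-1}{n-2}$ (respectively $\frac{R^{2}-1}{2}$ and $\frac{(R-1)^{2}}{2}$ when $n=2$), the main part $M+\frac{R^{n}-1}{2}(M-m)$ collapses to $\frac{R^{n}+1}{2}M-\frac{R^{n}-1}{2}m$, while the $|P'(0)|$ contribution reproduces exactly the stated coefficient. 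This produces \eqref{eqf} and \eqref{eqg} at $|\alpha|=1$, and the monotonicity in the first paragraph finishes the general case.

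The step that needs care -- and the main obstacle -- is securing the correct sign of the $-m$ term. The tempting shortcut of replacing $P$ by $P-\alpha m$ (which again does not vanish in $|z|<1$) and applying the plain Erd\"os--Lax bound \eqref{a0} only gives $\max_{|z|=1}|P'|\le\frac{n}{2}\max_{|z|=1}|P-\alpha m|\le\frac{n}{2}(M+|\alpha|m)$, i.e. the wrong sign on the minimum; so one genuinely needs the asymmetric refinement $\frac{n}{2}(M-m)$, which holds for $P$ but fails for its reciprocal $Q(z)=z^{n}\overline{P(1/\overline{z})}$. Once that refinement is in hand, matching the two cases of Lemma \ref{lm0} to the hypotheses $n>2$ and $n=2$ and checking the two elementary integrals completes the argument.
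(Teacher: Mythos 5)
Your proof is correct, but it takes a genuinely different route from the paper's. The paper never invokes the asymmetric Aziz--Dawood bound $\max_{|z|=1}|P'|\le \tfrac{n}{2}\bigl(\max_{|z|=1}|P|-\min_{|z|=1}|P|\bigr)$; instead it perturbs, applying Rouch\'e's theorem to $G(z)=P(z)+\alpha m z^{n}$ (which still has no zeros in $|z|<1$ when $|\alpha|<1$), runs the same radial integration $G(Re^{i\theta})-G(e^{i\theta})=\int_{1}^{R}e^{i\theta}G'(te^{i\theta})\,dt$ with Lemma \ref{lm0} applied to $G'$ (noting $G'(0)=P'(0)$ since $n\ge 2$) and only the plain Erd\H{o}s--Lax bound of Lemma \ref{lmel} applied to $G$, and then extracts the negative minimum-modulus term by choosing the argument of $\alpha$ so that $|P(Rz)+\alpha mR^{n}z^{n}|=|P(Rz)|+|\alpha|mR^{n}$ while bounding $\max_{|z|=1}|P+\alpha m z^{n}|\le \max_{|z|=1}|P|+|\alpha|m$; the case $|\alpha|=1$ is then recovered by continuity. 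You instead reduce to $|\alpha|=1$ at the outset via the (correct) observation that the right-hand side is non-increasing in $|\alpha|$, and import the minimum directly through the Aziz--Dawood refinement -- precisely the asymmetry you rightly identify as the crux, since the naive $P-\alpha m$ shortcut gives the wrong sign. Your integrations and case-matching ($P'$ of degree $n-1$ against \eqref{eq3} for $n>2$ and \eqref{eq4} for $n=2$) check out exactly. What each approach buys: yours is shorter and avoids both Rouch\'e and the argument-selection trick, but rests on a stronger external input -- the inequality $\max_{|z|=1}|P'|\le\tfrac{n}{2}(M-m)$ is indeed in the cited paper \cite{AD}, though the present paper records only the Tur\'an-side refinement \eqref{a3}, so your argument would require stating it as an additional lemma; the paper's proof is longer but self-contained modulo the lemmas it already lists, deriving the $m$-term from scratch with only Erd\H{o}s--Lax.
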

\begin{proof}[\bf{Proof of Lemma} \ref{lm00}]  By hypothesis all the zeros of $P(z)$ lie in $|z| \geq 1$.
Let $m=\min_{|z|=1}|P(z)|$, then $m\leq |P(z)|$ for $|z|=1$. Applying Rouche's theorem, it follows that the polynomial $G(z)=P(z)+\alpha m z^{n}$ has all its zeros in $|z|\geq 1$ for every $\alpha$ with $|\alpha|<1$ (this is trivially true for $m=0.$) 
Now for each $\theta$,~~$0\leq\theta < 2\pi$, we have
\begin{align}
G(Re^{i\theta})-G(e^{i\theta})= \int_{1}^{R} e^{i\theta}G^{\prime}(te^{i\theta})dt.
\end{align}
This gives with the help of \eqref{eq3} of Lemma \ref{lm0} and Lemma \ref{lmel} for $n>2$,
\begin{align*}
\begin{split}
\left|G(Re^{i\theta})-G(e^{i\theta})\right| &{} \leq \int_{1}^{R}|G^{\prime}(te^{i\theta})|dt\\
&{}\leq\frac{n}{2}\left(\int_{1}^{R}t^{n-1}dt\right)\max_{|z|=1}|G(z)|-\int_{1}^{R}\left(t^{n-1}-t^{n-3}\right)dt|G^{\prime}(0)|\\
&{}= \frac{R^{n}-1}{2}\max_{|z|=1}|G(z)|-\left(\frac{R^{n}-1}{n}-\frac{R^{n-2}-1}{n-2}\right)|P^{\prime}(0)|,
\end{split}
\end{align*}
so that for $n>2$ and $0\leq\theta < 2\pi$, we have
\begin{align*}
\begin{split}
\left|G(Re^{i\theta})\right|\leq &{} \left|G(Re^{i\theta})-G(e^{i\theta})\right|+\left|G(e^{i\theta})\right|\\
&{}= \frac{R^{n}+1}{2}\max_{|z|=1}|G(z)|-\left(\frac{R^{n}-1}{n}-\frac{R^{n-2}-1}{n-2}\right)|P^{\prime}(0)|.
\end{split}
\end{align*}
Replacing $G(z)$ by $P(z)+\alpha m z^{n}$, we get for $|z|=1$,
\begin{align}\label{ex}
\begin{split}
|P(Rz)+\alpha m R^{n}z^{n}|\leq {}&\frac{R^{n}+1}{2}\max_{|z|=1}|P(z)+\alpha m z^{n}|
\\{}&-\left(\frac{R^{n}-1}{n}-\frac{R^{n-2}-1}{n-2}\right)|P^{\prime}(0)|.
\end{split}
\end{align}
Choosing argument of $\alpha$ in the left hand side of \eqref{ex} suitably, we obtain for $n>2$ and $|z|=1$,
\begin{align*}
\begin{split}
|P(Rz)|+|\alpha| m R^{n}\leq {}&\frac{R^{n}+1}{2}\left\{\max_{|z|=1}|P(z)|+|\alpha|m\right\}\\{}&
-\left(\frac{R^{n}-1}{n}-\frac{R^{n-2}-1}{n-2}\right)|P^{\prime}(0)|,
\end{split}
\end{align*}
equivalently for $n>2$, $|\alpha|<1$ and $|z|=1$, we have
\begin{align*}
\begin{split}
|P(Rz)|\leq {}&\frac{R^{n}+1}{2}\max_{|z|=1}|P(z)|-|\alpha|\frac{R^{n}-1}{2}\min_{|z|=1}|P(z)|\\{}&
-\left(\frac{R^{n}-1}{n}-\frac{R^{n-2}-1}{n-2}\right)|P^{\prime}(0)|,
\end{split}
\end{align*}
which proves inequality \eqref{eqf} for $n>2$ and $|\alpha|<1$. Similarly we can prove inequality \eqref{eqg} for $n=2$ by using \eqref{eq4} of Lemma \ref{lm0} instead of \eqref{eq3}. For $|\alpha|=1$, the result follows by continuity. This completes the proof of Lemma \ref{lm00}.
\end{proof}
Finally we also need the Lemma due to Osserman \cite{RO}, known as boundary Schwarz lemma.
\begin{lemma}\label{lm5}
If\\
(a) \qquad $f(z)$ is analytic for $|z|<1$,\\
(b) \qquad $|f(z)|<1$ for $|z|<1$,\\
(c) \qquad $f(0)=0$,\\
(d) \qquad for some $b$ with $|b|=1, f(z)$ extends continuously to $b,$\\
\indent \qquad \quad $|f(b)|=1$ and $f^{\prime}(b)$ exists.\\
Then 
\begin{align}\label{le2}
|f^{\prime}(b)|\geq \frac{2}{1+|f^{\prime}(0)|}.
\end{align}
\end{lemma}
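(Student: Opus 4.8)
The plan is to reduce the statement to a boundary estimate for a self-map of the unit disc and then to invoke the invariant (Schwarz--Pick / Julia) form of the Schwarz lemma. First I would normalise the configuration. Replacing $f(z)$ by $\overline{f(b)}\,f(bz)$ leaves both $|f'(b)|$ and $|f'(0)|$ unchanged, preserves hypotheses (a)--(d), and moves the distinguished boundary point to $z=1$ with value $f(1)=1$. Thus it suffices to treat a self-map $f$ of the disc with $f(0)=0$, $f(1)=1$, and $f'(1)$ existing, and to show $\operatorname{Re} f'(1)\ge \tfrac{2}{1+|f'(0)|}$; the asserted inequality then follows at once from $|f'(1)|\ge \operatorname{Re} f'(1)$.

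Next I would exploit $f(0)=0$ to factor $f(z)=z\,g(z)$ with $g$ analytic on $|z|<1$. The classical Schwarz lemma gives $|g(z)|\le 1$, so $g$ is again a self-map of the disc, with $g(0)=f'(0)$ and $g(1)=f(1)=1$. Differentiating the factorisation yields $f'(1)=g(1)+g'(1)=1+g'(1)$, so the whole problem collapses to the single lower bound $g'(1)\ge \tfrac{1-|g(0)|}{1+|g(0)|}$. Granting this, one gets $\operatorname{Re} f'(1)=1+g'(1)\ge 1+\tfrac{1-|f'(0)|}{1+|f'(0)|}=\tfrac{2}{1+|f'(0)|}$, which is exactly what is needed.

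The heart of the argument, and the step I expect to be the main obstacle, is the boundary estimate for $g$. Here I would appeal to Julia's inequality: since $g$ maps the disc into itself, fixes the boundary point $1$, and has a finite derivative there (so that the angular derivative exists and is a nonnegative real number), one has
\[
\frac{|1-g(z)|^{2}}{1-|g(z)|^{2}}\le g'(1)\,\frac{|1-z|^{2}}{1-|z|^{2}}\qquad(|z|<1).
\]
Evaluating at $z=0$ gives $g'(1)\ge \dfrac{|1-g(0)|^{2}}{1-|g(0)|^{2}}\ge \dfrac{(1-|g(0)|)^{2}}{1-|g(0)|^{2}}=\dfrac{1-|g(0)|}{1+|g(0)|}$, which is precisely the bound used above. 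The real work lies in justifying this Julia-type inequality from first principles: it is the invariant form of the Schwarz lemma, obtained by applying the Schwarz--Pick inequality to $g$ and letting the base point approach the fixed point $1$, the existence of $f'(1)$ (hence of $g'(1)$) being exactly what forces the relevant boundary limit to be finite and the angular derivative to be real and nonnegative. Once this geometric input is secured, the chain of reductions above closes the proof.
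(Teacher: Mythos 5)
Your proof is correct, but note that the paper itself gives no proof of Lemma \ref{lm5}: it is quoted directly from Osserman \cite{RO}, so the fair comparison is with Osserman's argument. Both proofs begin with the same two reductions — rotate so that $b=1$ and $f(1)=1$, then factor $f(z)=zg(z)$, where the classical Schwarz lemma makes $g$ a self-map of the disc with $g(0)=f'(0)$, $g(1)=1$ and $f'(1)=1+g'(1)$ — but they diverge at the key estimate. Osserman stays elementary: from the Schwarz--Pick growth bound $|g(z)|\le \frac{|g(0)|+|z|}{1+|g(0)||z|}$ he gets $\frac{1-|f(z)|}{1-|z|}\ge 1+|z|\frac{1-|g(0)|}{1+|g(0)||z|}$, and since $|f(1)|=1$ forces $|f'(1)|\ge \limsup_{r\to 1^-}\frac{1-|f(r)|}{1-r}$, letting $r\to 1$ yields $|f'(1)|\ge \frac{2}{1+|f'(0)|}$ with no angular-derivative theory at all. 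You instead invoke Julia's inequality at the boundary fixed point, together with the Julia--Wolff--Carath\'eodory theorem to know the angular derivative is a positive real number coinciding with $g'(1)$; this is heavier classical machinery, and your sketch correctly identifies the one genuinely delicate point, namely that existence of $f'(1)$ supplies the finite radial liminf (via $\frac{1-|g(r)|}{1-r}\le \frac{|1-g(r)|}{1-r}\to |g'(1)|$) needed to launch Julia's lemma. What your route buys is slightly more: evaluating Julia's inequality at $z=0$ gives the stronger bound $g'(1)\ge \frac{|1-g(0)|^2}{1-|g(0)|^2}$ before you weaken it to $\frac{1-|g(0)|}{1+|g(0)|}$, which is essentially the sharper refinement Osserman records separately, and it shows the normalized boundary derivative is real and positive, which the elementary proof does not. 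Two loose ends you should close in a full write-up: the degenerate case $|f'(0)|=1$ (then $f(z)=cz$ by the equality case of the Schwarz lemma and the conclusion is immediate, while Julia's quotient degenerates), and an explicit verification that $g'(1)$ exists, which follows from the identity $\frac{g(z)-1}{z-1}=\frac{1}{z}\left(\frac{f(z)-1}{z-1}-1\right)$.
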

\section{\textbf{Proof of the Theorems}}
\begin{proof}[\textbf{Proof of Theorem \ref{th1}}] Let $g(z)=P(kz).$ Since all the zeros of $P(z)=a_{n}\prod_{j=1}^{n}(z-z_{j})$ lie in $|z|\leq k$ where $k\geq 1$, $g(z)$ has all its zeros in $|z|\leq 1$ and hence all the zeros of the conjugate polynomial $g^*(z)= z^n\overline{g(1/\bar{z})}$ lie in $|z|\geq 1.$\\
Therefore, the function 
\begin{align}\label{e}
F(z)=\frac{g(z)}{z^{n-1}\overline{g(1/\overline{z})}}=z\frac{a_n}{\bar{a_{n}}}\prod_{j=1}^{n}\left(\frac{kz-z_{j}}{k-z\bar{z_{j}}}\right) 
\end{align}
is analytic in $|z|<1$ with $F(0)=0$ and $|F(z)|=1$ for $|z|=1.$
Further for $|z|=1$, this gives
\begin{align*}
\frac{zF^{\prime}(z)}{F(z)}=1-n+\frac{zg^{\prime}(z)}{g(z)}+\overline{\bigg(\frac{zg^{\prime}(z)}{g(z)}\bigg)}
\end{align*}
so that
\begin{align}\label{thp1}
Re \bigg(\frac{zF^{\prime}(z)}{F(z)}\bigg) =1-n+2 Re\bigg(\frac{zg^{\prime}(z)}{g(z)}\bigg).
\end{align}
Also, we have from \eqref{e}
\begin{align*}
\frac{zF^{\prime}(z)}{F(z)}=1+\sum_{j=1}^{n}\left(\frac{k^{2}-|z_{j}|^2}{|kz-z_{j}|^{2}}\right)>0~~~\textnormal{for} ~~~ |z|=1,
\end{align*}
as such,
\begin{align*}
\frac{zF^{\prime}(z)}{F(z)}=\left|\frac{zF^{\prime}(z)}{F(z)}\right|=|F^{\prime}(z)|\qquad \textnormal{for} \quad |z|=1.
\end{align*}
Using this fact in \eqref{thp1}, we get
for points $z$ on $|z|=1$ with $g(z) \neq 0,$
\begin{align}\label{thp2}
1-n+2Re\bigg(\frac{zg^{\prime}(z)}{g(z)}\bigg)= |F^{\prime}(z)|.
\end{align}
Applying lemma \ref{lm5} to $F(z)$, we obtain for all points $z$ on $|z|=1$ with $g(z) \neq 0,$
\begin{align*}
1-n+2 Re\bigg(\frac{zg^{\prime}(z)}{g(z)}\bigg) \geq \frac{2}{1+|F^{\prime}(0)|},
\end{align*}
that is, for $|z|=1$ with $g(z) \neq 0,$
\begin{align*}
Re\bigg(\frac{zg^{\prime}(z)}{g(z)}\bigg) \geq  \frac{1}{2}\bigg(n+\frac{k^n|a_n|-|a_0|}{k^n|a_n|+|a_0|}\bigg).
\end{align*}
This implies
\begin{align*}
\bigg|\frac{zg^{\prime}(z)}{g(z)}\bigg| \geq \frac{1}{2}\bigg(n+\frac{k^n|a_n|-|a_0|}{k^n|a_n|+|a_0|}\bigg) \qquad \textnormal{for ~  $|z|=1, ~ g(z) \neq 0,$}
\end{align*}
and hence,
\begin{align}\label{ne}
|g^{\prime}(z)| \geq \frac{1}{2}\bigg(n+\frac{k^n|a_n|-|a_0|}{k^n|a_n|+|a_0|}\bigg)|g(z)| \qquad \textnormal{for ~ $|z|=1.$}
\end{align}
Replacing $g(z)$ by $P(kz),$ we get for $|z|=1,$
\begin{align*}
k|P^{\prime}(kz)| \geq \frac{1}{2}\bigg(n+\frac{k^n|a_n|-|a_0|}{k^n|a_n|+|a_0|}\bigg)|P(kz)|,
\end{align*}
or equivalently,
\begin{align}\label{pth4}
2k\max_{|z|=k} |P^{\prime}(z)| \geq \bigg(n+\frac{k^n|a_n|-|a_0|}{k^n|a_n|+|a_0|}\bigg) \max_{|z|=k} |P(z)|.
\end{align}
Since $P^{\prime}(z)$ is a polynomial of degree $n-1$, by \eqref{eq3} of Lemma \ref{lm0} with $R=k\geq 1$, we have
\begin{align*}
k^{n-1}\max_{|z|=1}|P^{\prime}(z)|-(k^{n-1}-k^{n-3})|a_1|\geq \max_{|z|=k}|P^{\prime}(z)|, \qquad \textnormal{if} \quad n >2.
\end{align*} 
Combining this inequality with \eqref{pth4}, we get for $n>2,$
\begin{align}\label{pth5}
\begin{split}
2k^{n}\max_{|z|=1}|P^{\prime}(z)|-2(k^{n}-k^{n-2})|a_{1}|\geq\bigg(n+\frac{k^n|a_n|-|a_0|}{k^n|a_n|+|a_0|}\bigg) \max_{|z|=k} |P(z)|.
\end{split}
\end{align} 
Since all the zeros of polynomial $g^{*}(z)= z^{n}\overline{g(1/\bar{z})}=z^{n}\overline{P(k/\bar{z})}$ lie in $|z|\geq 1,$
applying \eqref{eqf} of Lemma \ref{lm00} with $R=k\geq 1$ and $\alpha=0$ to the polynomial $g^{*}(z)$,we get
\begin{align*}
\begin{split}
\max_{|z|=k}|g^{*}(z)|\leq {}&\frac{k^{n}+1}{2}\max_{|z|=1}|g^{*}(z)|\\{}&-\frac{|a_{n-1}|}{k}\left(\frac{k^{n}-1}{n}-\frac{k^{n-2}-1}{n-2}\right)~~\qquad \textnormal{if}\quad n >2.
\end{split}
\end{align*}
That is,
\begin{align*}
\begin{split}
k^{n}\max_{|z|=1}|P(z)|\leq {}&\frac{k^{n}+1}{2}\max_{|z|=k}|P(z)|\\{}&-|a_{n-1}|k^{n-1}\left(\frac{k^{n}-1}{n}-\frac{k^{n-2}-1}{n-2}\right)~~\qquad \textnormal{if} \quad n >2,
\end{split}
\end{align*}
or equivalently, we have for $n>2$,
\begin{align*}
\max_{|z|=k}|P(z)| \geq \frac{2k^{n}}{k^{n}+1}\max_{|z|=1}|P(z)|+\frac {2k^{n-1}|a_{n-1}|}{k^{n}+1}\left(\frac{k^{n}-1}{n}-\frac{k^{n-2}-1}{n-2}\right).
\end{align*} 
Using above inequality in \eqref{pth5}, we get for $n>2$, 
\begin{align*}
\begin{split}
2k^{n}\max_{|z|=1}|P^{\prime}(z)|-{}&2(k^{n}-k^{n-2})|a_{1}|\geq \frac{2k^{n}}{1+k^{n}}\bigg(n+\frac{k^n|a_n|-|a_0|}{k^n|a_n|+|a_0|}\bigg) \max_{|z|=1} |P(z)|\\
+{}&\frac{2k^{n-1}|a_{n-1}|}{1+k^{n}}\bigg(n+\frac{k^n|a_n|-|a_0|}{k^n|a_n|+|a_0|}\bigg)\left(\frac{k^{n}-1}{n}-\frac{k^{n-2}-1}{n-2}\right),
\end{split}
\end{align*}
consequently, 
\begin{align*}
\begin{split}
\max_{|z|=1}|P^{\prime}(z)|\geq {}& \frac{1}{1+k^{n}}\bigg(n+\frac{k^{n}|a_n|-|a_0|}{k^{n}|a_n|+|a_0|}\bigg) \max_{|z|=1} |P(z)|\\+{}&
\frac{|a_{n-1}|}{k(1+k^{n})}\bigg(n+\frac{k^{n}|a_n|-|a_0|}{k^{n}|a_n|+|a_0|}\bigg)\left(\frac{k^{n}-1}{n}-\frac{k^{n-2}-1}{n-2}\right)\\
+{}&(1-1/k^{2})|a_{1}| , \qquad \qquad \qquad \qquad \qquad \textnormal{if} \quad n>2,
\end{split}
\end{align*}
which proves inequality \eqref{e1} for the case $n>1.$ For the case $n=2$, the result follows on similar lines in view of part second of Lemma \ref{lm0} and Lemma \ref{lm00} with $\alpha=0$.
This completes the proof of Theorem \ref{th1}.
\end{proof}
\begin{proof}[\textbf{Proof of Theorem \ref{th2}}]
By hypothesis  $P\in\mathcal{P}_n$ and $P(z)$ has all its zeros in $|z|\leq k, k\geq 1$. If $P(z)$ has a zero on $|z|=k$, then $m =0$ and the result follows by Theorem \ref{th1}. Henceforth, we assume that all the zeros of $P(z)$ lie in $|z|<k,$ so that $m>0$. Hence all the zeros of $h(z)=P(kz)$ lie in disk $|z|<1$ and $ m=\min_{|z|=k}|P(z)|=\min_{|z|=1}|h(z)|.$ Therefore, we have $m \leq |h(z)|$ for $|z|=1.$ This implies for every $\lambda \in \mathbb{C}$ with $|\lambda|<1$ that 
\begin{align*}
m |\lambda z^n| < |h(z)| \qquad \textnormal{for} \quad |z|=1.
\end{align*}
Applying Rouche's theorem, it follows that all the zeros of the polynomial
$H(z)=h(z)+\lambda m z^n$ lie in $|z|<1$ for every $\lambda \in \mathbb{C}$ with  $|\lambda|<1$. Now proceeding similarly as in the proof of Theorem \ref{th1} (with $g(z)$ replacing by $H(z)$), we obtain from \eqref{ne}
\begin{align}\label{tpa1}
|H^{\prime}(z)| \geq \frac{1}{2}\bigg(n+ \frac{|k^na_n+\lambda m|-|a_0|}{|k^na_n+\lambda m|+|a_0|}\bigg) |H(z)| \qquad \textnormal{for} ~ |z|=1.
\end{align}
Using the fact that the function $t(x)= \frac{x-|a|}{x+|a|}$ is non-decreasing function of $x$ and $|k^na_n+\lambda m| \geq k^n|a_n|-|\lambda m|$, we get for every $\lambda \in \mathbb{C}$ with $|\lambda|<1$ and $|z|=1,$
\begin{align}\label{tpa2}
|H^{\prime}(z)| \geq \frac{1}{2}\bigg(n+ \frac{k^n|a_n|-|\lambda m|-|a_0|}{k^n|a_n|-|\lambda m|+|a_0|}\bigg) |H(z)|.
\end{align}
Equivalently for $|z|=1$ and $|\lambda|<1,$
\begin{align}\label{tpa3}
|h^{\prime}(z)+nm\lambda z^{n-1}| \geq \frac{1}{2}\bigg(n+ \frac{k^n|a_n|-|\lambda m|-|a_0|}{k^n|a_n|-|\lambda m|+|a_0|}\bigg) (|h(z)|-m|\lambda|).
\end{align}
Since all the zeros of $H(z)=h(z)+\lambda m z^n$ lie in $|z|<1,$ by Guass Lucas theorem it follows that all the zeros of $H^{\prime}(z)=h^{\prime}(z)+\lambda nm z^{n-1}$ lie in $|z|<1$ for every $\lambda \in \mathbb{C}$ with $|\lambda|<1.$ This implies
\begin{align}\label{gl}
|h^{\prime}(z)| \geq nm |z|^{n}\qquad \textnormal{for} ~  |z| \geq 1.
\end{align}
\noindent Choosing argument of $\lambda$ in the left hand side of \eqref{tpa3} such that
\begin{align*}
|h^{\prime}(z)+nm\lambda z^{n-1}| = |h^{\prime}(z)|-nm|\lambda| \qquad \textnormal{for} ~  |z|=1,
\end{align*}
which is possible by \eqref{gl}, we get
\begin{align*}
|h^{\prime}(z)|-nm|\lambda| \geq \frac{1}{2}\bigg(n+ \frac{k^n|a_n|-|\lambda m|-|a_0|}{k^n|a_n|-|\lambda m|+|a_0|}\bigg) (|h(z)|-m|\lambda|),
\end{align*}
that is,
\begin{align*}
\begin{split}
|h^{\prime}(z)| \geq {}&\frac{1}{2}\bigg(n+ \frac{k^n|a_n|-|\lambda m|-|a_0|}{k^n|a_n|-|\lambda m|+|a_0|}\bigg)|h(z)|\\{}&+\frac{1}{2}\bigg(n- \frac{k^n|a_n|-|\lambda m|-|a_0|}{k^n|a_n|-|\lambda m|+|a_0|}\bigg)|\lambda|m.
\end{split}
\end{align*}
Replacing $h(z)$ by $P(kz)$, we get 
\begin{align}\label{tpa4}
\begin{split}
k\max_{|z|=k}|P^{\prime}(z)| \geq{}& \frac{1}{2}\bigg(n+ \frac{k^n|a_n|-|\lambda m|-|a_0|}{k^n|a_n|-|\lambda m|+|a_0|}\bigg)\max_{|z|=k}|P(z)|\\
&+\frac{1}{2}\bigg(n- \frac{k^n|a_n|-|\lambda m|-|a_0|}{k^n|a_n|-|\lambda m|+|a_0|}\bigg)|\lambda|m.
\end{split}
\end{align}
Again as before, using \eqref{eq3} of Lemma \ref{lm0} and \eqref{eqf} of lemma \ref{lm00}, we obtain for $0 \leq l < 1$ and $n>2$,
\begin{align*}
\begin{split}
k^{n}\max_{|z|=1}|P^{\prime}(z)|- {}& (k^{n}-k^{n-2})|a_1| \geq \frac{1}{2}\bigg(n+ \frac{k^n|a_n|-lm-|a_0|}{k^n|a_n|-lm+|a_0|}\bigg)\\
&\times \bigg\{ \frac{2k^{n}}{1+k^{n}}\max_{|z|=1}|P(z)|+l \bigg(\frac{k^{n}-1}{k^{n}+1} \bigg)\min_{|z|=k}|P(z)|\\
&+\frac{2k^{n-1}|a_{n-1}|}{k^n+1} \bigg(\frac{k^n-1}{n}-\frac{k^{n-2}-1}{n-2}\bigg)\bigg\}\\
&+\frac{1}{2}\bigg(n-\frac{k^n|a_n|-lm-|a_0|}{k^n|a_n|-lm+|a_0|}\bigg)lm,
\end{split}
\end{align*}
which on simplification yields for $0 \leq l < 1$ and $ n>2 $,
\begin{align*}
\begin{split}
\max_{|z|=1} |P^{\prime}(z)|{}&\geq \frac{n}{1+k^n} \left( \max_{|z|=1}|P(z)| + lm \right)+\frac{n|a_{n-1}|}{k(1+k^{n})} \bigg( \frac{k^{n}-1}{n}-\frac{k^{n-2}-1}{n-2}\bigg)\\
&+\bigg(\frac{k^n|a_n|-lm-|a_0|}{k^n|a_n|-lm+|a_0|}\bigg)\bigg\{\frac{1}{k^n(1+k^n)} \left(k^n\max_{|z|=1}|P(z)|-lm \right)\\
&+\frac{|a_{n-1}|}{k(1+k^{n})} \bigg( \frac{k^{n}-1}{n}-\frac{k^{n-2}-1}{n-2}\bigg) \bigg\}+(1-1/k^{2})|a_1|.\\
\end{split}
\end{align*}
The above inequality is equivalent to the inequality \eqref{e2} for $n>2$. For $n=2$, the result follows on the similar lines by using inequality \eqref{eq4} of Lemma \ref{lm0} and inequality \eqref{eqg} of Lemma \ref{lm00} in the inequality \eqref{tpa4}. This proves Theorem \ref{th2}.
\end{proof}
\section{\textbf{Concluding Remark}}
\textnormal If we use Lemma \ref{lm0} and  Lemma \ref{lm00} with $|\alpha|=1$ in the proof of Theorem \ref{th1}, we get the following refinement of inequalities \eqref{a2} and \eqref{e1}.
\begin{theorem}\label{th3}
If $P\in\mathcal{P}_n$ has all its zeros in $|z|\leq k$ where $k\geq 1,$ then
\begin{align}\label{e5}
\begin{split}
\max_{|z|=1} |P^{\prime}(z)|{}&\geq \frac{1}{1+k^n}\bigg(n+\frac{k^n|a_n|-|a_0|}{k^n|a_n|+|a_0|}\bigg)\max_{|z|=1}|P(z)|+|a_1|\psi(k)\\
+{}&\frac{k^{n}-1}{2k^{n}(1+k^n)}\bigg(n+\frac{k^n|a_n|-|a_0|}{k^n|a_n|+|a_0|}\bigg)\min_{|z|=1}|P(z)|
\end{split}
\end{align}
\end{theorem}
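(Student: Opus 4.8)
The plan is to run the proof of Theorem \ref{th1} essentially verbatim, changing only the single place where Lemma \ref{lm00} is invoked: instead of applying \eqref{eqf} with $\alpha=0$, I will apply it with $|\alpha|=1$, so that the minimum-modulus term is retained rather than discarded, exactly as announced in the concluding remark.

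First I would reproduce the argument of Theorem \ref{th1} up through inequality \eqref{pth5}. Setting $g(z)=P(kz)$, forming $F(z)=g(z)/\big(z^{n-1}\overline{g(1/\overline z)}\big)$, and applying Osserman's boundary Schwarz lemma (Lemma \ref{lm5}) to $F$ gives, exactly as in \eqref{ne} and \eqref{pth4},
\[
2k\max_{|z|=k}|P'(z)| \ge \Big(n+\tfrac{k^n|a_n|-|a_0|}{k^n|a_n|+|a_0|}\Big)\max_{|z|=k}|P(z)|.
\]
Passing from radius $k$ to radius $1$ for the degree-$(n-1)$ polynomial $P'$ via \eqref{eq3} of Lemma \ref{lm0} (with $R=k$ and $P'(0)=a_1$) then yields \eqref{pth5}, a lower bound for $\max_{|z|=1}|P'|$ in terms of $\max_{|z|=k}|P|$ and $|a_1|$; here the factor $(k^n-k^{n-2})/k^n$ becomes $\psi(k)=1-1/k^2$ after dividing by $k^n$.

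The decisive step is to bound $\max_{|z|=k}|P|$ from below. I would apply Lemma \ref{lm00} to the polynomial $g^*(z)=z^n\overline{P(k/\overline z)}$, which has no zeros in $|z|<1$, at $R=k$, but now with $|\alpha|=1$. Compared with the $\alpha=0$ version used for Theorem \ref{th1}, inequality \eqref{eqf} now carries the extra nonnegative term $-\tfrac{k^n-1}{2}\min_{|z|=1}|\cdot|$, and this is precisely the term that propagates to the $\min_{|z|=1}|P(z)|$ contribution in \eqref{e5}. Rearranging the resulting upper bound for $\max_{|z|=k}|g^*|$ into a lower bound for $\max_{|z|=k}|P|$, and discarding the nonnegative derivative term proportional to $|a_{n-1}|$ (which only weakens the bound and accounts for the absence of an $|a_{n-1}|$ term in \eqref{e5}), gives a clean estimate of the form $\max_{|z|=k}|P|\ge \tfrac{2k^n}{k^n+1}\max_{|z|=1}|P|+\tfrac{k^n-1}{k^n+1}\min_{|z|=1}|P|$.

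Finally I would substitute this lower bound into \eqref{pth5}, divide through by $2k^n$, and collect terms: the $\max$-coefficient becomes $\tfrac{1}{1+k^n}\big(n+\tfrac{k^n|a_n|-|a_0|}{k^n|a_n|+|a_0|}\big)$, the $|a_1|$-coefficient becomes $\psi(k)$, and the minimum-modulus coefficient organizes into $\tfrac{k^n-1}{2k^n(1+k^n)}\big(n+\tfrac{k^n|a_n|-|a_0|}{k^n|a_n|+|a_0|}\big)$, producing exactly \eqref{e5} for $n>2$. The case $n=2$ follows along identical lines using \eqref{eq4} of Lemma \ref{lm0} and \eqref{eqg} of Lemma \ref{lm00} (with $\psi(k)=1-1/k$). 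The main obstacle I anticipate is purely the bookkeeping: carrying the minimum-modulus term correctly through the reciprocal substitution so that its coefficient lands on $\tfrac{k^n-1}{2k^n(1+k^n)}$, and confirming that discarding the $|a_{n-1}|$ term is what collapses the estimate to the clean closed form of \eqref{e5}.
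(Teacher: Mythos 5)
Your route is indeed the paper's own route --- the paper proves this theorem exactly as you propose, by rerunning the proof of Theorem \ref{th1} with $|\alpha|=1$ in Lemma \ref{lm00} --- but both you and the paper stumble at precisely the step you flagged as ``bookkeeping''. When \eqref{eqf} is applied to $g^{*}(z)=z^{n}\overline{P(k/\overline{z})}$ with $R=k$, the retained term is $-\frac{k^{n}-1}{2}\min_{|z|=1}|g^{*}(z)|$; but on $|z|=1$ one has $|g^{*}(z)|=|g(z)|=|P(kz)|$, so $\min_{|z|=1}|g^{*}(z)|=\min_{|z|=k}|P(z)|$, \emph{not} $\min_{|z|=1}|P(z)|$. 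Your intermediate estimate must therefore read $\max_{|z|=k}|P(z)|\ge \frac{2k^{n}}{k^{n}+1}\max_{|z|=1}|P(z)|+\frac{k^{n}-1}{k^{n}+1}\min_{|z|=k}|P(z)|$, and substituting it into \eqref{pth5} proves the inequality with $\min_{|z|=k}|P(z)|$ in place of $\min_{|z|=1}|P(z)|$. The two minima are not comparable in the direction you would need: $P$ may vanish on $|z|=k$, making $\min_{|z|=k}|P|=0$ while $\min_{|z|=1}|P|>0$, so no a posteriori comparison rescues the substitution.

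This is not a repairable slip in your write-up, because the statement as printed is false for every $k>1$, and the paper's own claimed extremal polynomial witnesses this. For $P(z)=z^{n}+k^{n}$ one has $\max_{|z|=1}|P^{\prime}(z)|=n$, $\max_{|z|=1}|P(z)|=1+k^{n}$, $a_{1}=0$, and $k^{n}|a_{n}|=|a_{0}|$ so the bracketed factor equals $n$; since $\min_{|z|=1}|P(z)|=k^{n}-1>0$, the claimed right-hand side is $n+\frac{n(k^{n}-1)^{2}}{2k^{n}(1+k^{n})}>n$, contradicting the asserted bound (and the asserted equality case). With the minimum taken over $|z|=k$ the extra term vanishes, since $\min_{|z|=k}|z^{n}+k^{n}|=0$, and equality holds --- confirming that the corrected form is what your argument, and the paper's, actually establishes. (The paper's stray final display makes exactly this mis-translation of $\min_{|z|=1}|g^{*}|$.) Everything else in your proposal is sound: the passage through \eqref{pth4} and \eqref{pth5}, the division by $2k^{n}$ producing the stated coefficients, discarding the nonnegative $|a_{n-1}|\phi(k)$ term, and the $n=2$ case via \eqref{eq4} and \eqref{eqg}.
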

where 
\begin{align*}
\psi(k)= \left(1-1/k^{2}\right)~ or ~ \left(1-1/k\right)~ according ~as ~ n > 2 ~or ~ n = 2.
\end{align*}
The result is sharp and equality in \eqref{e5} holds for $P(z)= z^{n}+k^{n}$.
.
\end{document}